\documentclass{iasr}
\setcounter{page}{1}

\usepackage[dvips]{graphicx}
\usepackage{amsthm}

\begin{document}

\title{On Baer Invariants of Triples of Groups}
\author[Zohreh Vasagh, Hanieh Mirebrahimi and Behrooz Mashayekhy]{Zohreh Vasagh,
      Hanieh Mirebrahimi, and Behrooz Mashayekhy\corrauth}
 \address{Department of Pure Mathematics,
 Center of Excellence in Analysis on Algebraic Structures,
  Ferdowsi University of Mashhad,
  P.O.Box 91775-1159,
  Mashhad, Iran.}
 \corraddr{Behrooz Mashayekhy, Department of Pure Mathematics,
 Center of Excellence in Analysis on Algebraic Structures,
  Ferdowsi University of Mashhad,
  P.O.Box 91775-1159,
  Mashhad, Iran.
  Email: \tt bmashf@um.ac.ir}
\received{}

\newtheorem{thm}{Theorem}[section]
 \newtheorem{cor}[thm]{Corollary}
 \newtheorem{lem}[thm]{Lemma}
 \newtheorem{prop}[thm]{Proposition}
 \newtheorem{defn}[thm]{Definition}
 \newtheorem{rem}[thm]{Remark}
\newtheorem{Example}{Example}[section]

\begin{abstract}
In this paper, we develop the theory of Baer invariants for triples of groups. First, we focus on the general properties of the Baer invariant of triples. Second, we prove
that the Baer invariant of a triple preserves direct limits of directed systems of triples of groups. Moreover, we present a structure for the nilpotent multiplier of a triple
of the free product in some cases. Finally, we give some conditions in which the Baer invariant of a triple is a torsion group.
\end{abstract}

\keywords{Baer invariant; Triple of groups; Simplicial groups.}
\ams{57M07, 20J05, 55U10}
\maketitle

\section{Introduction and Motivation}
\label{sec1}
Throughout the article, let $\mathcal{V}$ be an arbitrary variety of groups defined by a set of laws $V$ and consider the functor ${V(-)}$ between groups which takes a group to its verbal subgroup. By a triple of groups $(G,M,N)$ we mean a group $G$ with two normal subgroups $N$, $M$. A homomorphism of triples $(G_1,M_1,N_1) \rightarrow (G_2,M_2,N_2)$ is a group homomorphism $G_1 \rightarrow G_2$ that sends $M_1$ into $M_2$ and $N_1$ into $N_2$.

Finding a topological interpretation for algebraic concepts  can be useful to solve some algebraic problems.
The Schur multiplier, $M(G)$, of an arbitrary group $G$ can be described as the second integral homology $H_2(X)$ of a connected
CW-space $X$, where $X$ has the fundamental group $G$ and contractible universal cover \cite{hop}. The Baer invariant of $G$, $\mathcal{V}M(G)$, has a similar topological interpretation. For this, consider the first homotopy group, $\pi_1(K_./V(K_.))$, of the factor of a free simplicial resolution of the group $G$ \cite{fr}.

G. Ellis \cite{ellis} defined the Schur multiplier of a pair of groups $(G,N)$ denoted by $M(G,N)$, as a functorial abelian group whose principal feature is a natural exact sequence
$$\begin{array}{ll}\cdots &\rightarrow
M(G,N)\rightarrow M(G)\rightarrow M(G/N)\\&\rightarrow N/[N,G]\rightarrow G^{ab}\rightarrow
(G/N)^{ab}\rightarrow 0.\end{array}$$
The natural epimorphism $G\rightarrow G/N$ implies the following exact sequence of free simplicial groups
$$1\rightarrow\ker (\alpha) \rightarrow K_. \stackrel{\alpha}{\rightarrow}L_.\rightarrow 1,$$
where $K_.$ and $L_.$ are free simplicial resolutions of $G$ and $G/N$, respectively \cite{fr}.
The authors in \cite{mmv} extended the above notation to the Baer invariant of a pair of groups, $\mathcal{V}M(G,N)$, as the first homotopy group of the kernel of a map $\alpha_v$, $\pi_1(\ker\alpha_v)$, where $\alpha_v:K_./V(K_.)\rightarrow L_./V(L_.)$ is induced from the simplicial map $\alpha$. Also the authors obtained the following long exact sequence
$$\begin{array}{ll}\cdots &\rightarrow
\mathcal{V}M(G,N)\rightarrow \mathcal{V}M(G)\rightarrow \mathcal{V}M(G/N)\\&\rightarrow N/[NV^*G]\rightarrow G/V(G)\rightarrow
(G/N)/V(G/N)\rightarrow 0.\end{array}$$

 The Schur multiplier of a triple was first studied by Ellis \cite{ellis}, who introduced it as a functorial abelian group $M(G,M,N)$ which fits into the following natural exact sequence
$$
\begin{array}{ll}
\cdots&\rightarrow M(G,M,N)\rightarrow M(G,N)\rightarrow M({G}/{M},{MN}/{M})\\
&\rightarrow {M\cap N}/{[M\cap N,G][M,N]}\rightarrow{N}/{[N,G]}\rightarrow {MN}/{M[N,G]}\rightarrow 0.
\end{array}
$$
 In this article, we intend to extend the concept of the Schur multiplier of triples to the Baer invariant of triples of groups with respect to an arbitrary variety $\cal V$ using
 homotopy theory for simplicial groups and free simplicial resolutions of groups.
\begin{definition}
Consider the following natural commutative diagram of a triple of groups $(G,M,N)$
$$
\begin{array}{ccc}
G &\rightarrow &\frac{G}{N}\\ \downarrow &&\downarrow\\ \frac{G}{M}&
\rightarrow &\frac{G}{MN}
\end{array}
$$
which implies the following commutative diagram of simplicial groups
\begin{equation}\label{eqa}
\begin{array}{rcccccl}
&1&&1&&1&\\&\downarrow&&\downarrow&&\downarrow&\\1\rightarrow&\ker(\alpha_V,\gamma_V)&\rightarrow&\ker\beta_V&
\rightarrow&\ker\delta_V&\rightarrow1\\&\downarrow&&\downarrow&&\downarrow&\\
1\rightarrow& \ker\alpha_V&\rightarrow&\frac{K_.}{V(K_.)} &\stackrel{\alpha_V}{\rightarrow}& \frac{L_.}{V(L_.)}&\rightarrow1\\ &\downarrow&&{\beta_V}{\downarrow}\ \ \ &&{\delta_V}{\downarrow} \ \ \ &\\ 1\rightarrow& \ker\gamma_V&\rightarrow&\frac{R_.}{V(R_.)}&\stackrel{\gamma_V}{\rightarrow}&
 \frac{S_.}{V(S_.)}& \rightarrow 1 \\ &\downarrow&&\downarrow&&\downarrow&\\&1&&1&&1,&\end{array}
\end{equation}
where $K_.$, $L_.$, $R_.$ and $S_.$ are free simplicial resolutions of $G$,  $G/N$, $G/M$ and $G/MN$, respectively \cite{bar}.
We define the Baer invariant of the triple  $(G,M,N)$ with respect to a variety of groups $\mathcal{V}$ as follows
 $$\mathcal{V}M(G,M,N)=\pi_1(\ker(\alpha_V,\gamma_V)),$$
  where $\ker(\alpha_V,\gamma_V)$ is defined in diagram \eqref{eqa}.
  \end{definition}

The article is organized as follows. In Section 2, we give a brief review for simplicial groups and some of their properties needed for the rest. In Section 3, First we give some useful general results for computation of Baer invariant of triple of groups. In particular, we obtain long exact sequences that contain Baer invariants of triples and pairs of groups.
Second, we prove that the Baer invariant of a triple preserves direct limits of directed systems of triples of groups. Moreover, we study the behavior of $c$-nilpotent multiplier with respect to the free product in some cases. Finally, we obtain some conditions in which the $c$-nilpotent multiplier of a triple of groups is a torsion group.

\section{Preliminaries and Notation}
\label{sec2}
In this section we recall several concepts and results about the simplicial groups which will be needed in the sequel. A detailed can be found in \cite{cur} or \cite{geo}.

A \emph{simplicial group} $K_.$ is a sequence of groups $K_0, K_1, K_2,\ldots$ together with homomorphisms $d_i:K_n\rightarrow K_{n-1}$ (faces) and $s_i:K_n\rightarrow K_{n+1}$ (degeneracies), for each $0\leq i\leq n$, such that the following conditions hold:
\[ \begin{array}{lcl}
  d_j d_i  &= & \ \ \ d_{i-1} d_j \ \ \quad\text{for $ j<i$} \\
   s_j s_i & =& \ \ \ s_{i+1} s_j \ \ \quad\text{for $ j\leq i$}\\
d_j s_i&=&\begin{cases}
s_{i-1} d_j  &\text{for $ j<i$;} \\
identity &\text{for $ j=i,i+1$;} \\
s_i d_{j-1}&\text{for $ j>i+1$.} \\
\end{cases} \end{array}\]
A \emph{simplicial homomorphism} $f:K_.\rightarrow L_.$ is a sequence of homomorphisms $f_n:K_n\rightarrow L_n$, for each $n\geq 0$, such that $f\circ d_i=d_i\circ f$, and $f\circ s_i=s_i\circ f$, that is the following diagram commutes:
\[
\begin{array}{ccccc}
K_{n+1}  & \stackrel{s_i}{\leftarrow}  &{K_n} & \stackrel{d_i}{\rightarrow}& K_{n-1}\\
\tiny{f_{n+1}}\downarrow\ \ \ \ &     &\tiny{f_{n}}\downarrow\ \ \ \ &         &\tiny{f_{n-1}}\downarrow\ \ \ \ \\
L_{n+1}   & \stackrel{s_i}{\leftarrow}&  {L_n} & \stackrel{d_i}{\rightarrow}&  L_{n-1}.
\end{array}
\]

From a simplicial group $K_.$ we can derive a chain complex $(NK_.,\partial)$ which is called the \emph{Moore complex} as follows: $(NK)_n=\cap_{i=0}^{n-1}Ker d_i$ and $\partial_n:NK_n\rightarrow NK_{n-1}$ to be the restriction of $d_n$.

\begin{theorem}\label{sg} (\cite{cur}).
\begin{enumerate}
\item For every simplicial group $K_.$ the homotopy group $\pi_n(K_.)$ is abelian even for $n=1$.\label{a}
\item Every epimorphism  between simplicial groups is a fibration.\label{f}
\item  Let $K_.$ be a simplicial group, then $\pi_*(K_.)\cong H_*(NK_.)$.\label{m}
\item $H_n\big(N(K_.\otimes L_.)\big)\cong H_n\big(N(K_.)\otimes N(L_.)\big).$ \label{n}
\end{enumerate}
\end{theorem}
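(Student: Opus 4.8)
The plan is to treat these four statements as the standard structural facts about simplicial groups, organizing everything around the Moore complex and the Kan extension condition, so the first thing I would establish (even though it is not stated explicitly) is that every simplicial group $K_.$ is a Kan complex. Given a horn, i.e.\ a compatible family of faces $x_0,\dots,x_{k-1},x_{k+1},\dots,x_{n}$ omitting the $k$-th, I would construct a filler by the classical inductive correction: start from a degenerate approximation and successively multiply by suitable degeneracies $s_j$ of the discrepancy terms, using that each $K_n$ is a genuine group so that these corrections can be formed and composed. This extension property is what makes $\pi_n(K_.)$ well behaved and is the engine behind the remaining parts.

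For part (\ref{f}), I would show that a surjection $f\colon K_.\rightarrow L_.$ satisfies the relative extension condition. Given a horn in $K_.$ together with a filler $y\in L_{n}$ of its image in the base, I would first lift $y$ to some $\tilde y\in K_{n}$ by surjectivity; the faces of $\tilde y$ then agree with the prescribed horn up to elements of $\ker f$. Since $\ker f$ is again a simplicial group, hence a Kan complex by the previous step, I can fill the resulting horn inside $\ker f$ and multiply $\tilde y$ by that filler to obtain an honest lift. Thus $f$ is a fibration with fibre the simplicial group $\ker f$. For part (\ref{m}), the central identification, I would compare the Kan-theoretic definition of $\pi_n(K_.)$ with the homology of the Moore complex. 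Writing $Z_n=\{x\in K_n \mid d_ix=e,\ 0\le i\le n\}$ for the spherical elements, I would show that every homotopy class has a representative lying in $NK_n=\cap_{i=0}^{n-1}\ker d_i$, and that two such representatives are simplicially homotopic exactly when they differ by an element of $\partial_{n+1}(NK_{n+1})$; this gives $\pi_n(K_.)\cong \ker\partial_n/\partial_{n+1}(NK_{n+1})=H_n(NK_.)$.

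For part (\ref{a}) I would invoke the Eckmann--Hilton argument: on $\pi_n(K_.)$ one has the multiplication induced by the simplicial group structure and the multiplication coming from the Kan-complex concatenation, both unital and satisfying the interchange law, so they coincide and are commutative, yielding abelianness even for $n=1$; equivalently one can read it off the description in part (\ref{m}). Finally, for part (\ref{n}) I would appeal to the Eilenberg--Zilber theorem: the shuffle map $\nabla\colon N(K_.)\otimes N(L_.)\rightarrow N(K_.\otimes L_.)$ and the Alexander--Whitney map are mutually inverse chain homotopy equivalences, so passing to homology gives the stated isomorphism.

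The main obstacles are part (\ref{m}), whose proof requires careful combinatorial bookkeeping with faces and degeneracies in order to produce the normal-form representative and to translate the homotopy relation into a boundary relation, and part (\ref{n}), which rests on the full Eilenberg--Zilber machinery. Since all four are classical results of Moore and of Eilenberg and Zilber, I would reconstruct these arguments along the above lines rather than re-derive them from first principles, citing \cite{cur} for the detailed verifications.
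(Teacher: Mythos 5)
Your outline is correct: the paper itself gives no proof of this theorem, merely citing Curtis's survey, and your reconstruction (Moore's horn-filling argument showing simplicial groups are Kan, the relative filling argument via the kernel for part (\ref{f}), normalization of spherical representatives for part (\ref{m}), Eckmann--Hilton for part (\ref{a}), and Eilenberg--Zilber for part (\ref{n})) is precisely the standard chain of arguments found in that cited source. Since these are classical background facts, deferring the combinatorial details to \cite{cur} is exactly what the paper does as well.
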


A simplicial group $K_.$ is said to be \emph{free} if each $K_n$ is a free group and degeneracy homomorphisms $s_i$'s send the free basis of $K_n$ into the free basis of $K_{n+1}$. A \emph{free simplicial resolution} of $G$ consists of a free simplicial group $K_.$ with $\pi_0(K_.)=G$ and $\pi_n(K_.)=o$ for $n\geq1$ (see \cite{in}).
\section{Baer Invariants of Triples}
\label{sec3}
 In this section, we study the behavior of Baer invariants of triples of groups. First, we give some basic and essential properties of the Baer invariant of a triple of groups.
\begin{theorem}
The Baer invariant of a triple of groups is a functor from the category of triples of groups to the category of abelian groups.
\end{theorem}

\begin{proof}
 Let $f:(G_1,M_1,N_1)\rightarrow(G_2,M_2,N_2)$ be a homomorphism of triples of groups, functorial property of free simplicial resolutions yields the proof completed.
 \end{proof}

\begin{theorem}
Let $(G,M,N)$ be a triple of groups and $\mathcal{V}$ be an arbitrary variety of groups defined by a set of laws $V$. Then
 $\mathcal{V}M(G,M,N)$ satisfies in the following long exact sequences
\begin{equation}\label{triple}\begin{array}{ll}\cdots &\rightarrow\mathcal{V}M(G,M,N)\rightarrow\mathcal{V}M(G,N)\rightarrow\mathcal{V}M(G/M,MN/M)\\
&\rightarrow\pi_0(\ker(\alpha_V,\gamma_V))\rightarrow{N}/{[NV^*G]}\rightarrow {MN}/{M[NV^*G]}\rightarrow 0\end{array}\end{equation}
and
$$\begin{array}{ll}\cdots &\rightarrow\mathcal{V}M(G,M,N)\rightarrow\mathcal{V}M(G,M)\rightarrow\mathcal{V}M(G/N,MN/N)\\
&\rightarrow\pi_0(\ker(\alpha_V,\gamma_V))\rightarrow{M}/{[MV^*G]}\rightarrow {MN}/{N[MV^*G]}\rightarrow 0.\end{array}$$
\end{theorem}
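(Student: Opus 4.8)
The plan is to read both sequences directly off the homotopy long exact sequences of the two evident fibrations encoded in diagram \eqref{eqa}, combining the definition of the Baer invariant of a triple with the pair sequence recalled in the introduction.

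First I would extract the leftmost column of \eqref{eqa}, the short exact sequence of simplicial groups
$$1 \to \ker(\alpha_V, \gamma_V) \to \ker\alpha_V \to \ker\gamma_V \to 1,$$
whose right-hand map is the restriction of $\beta_V$ and whose exactness (in particular the surjectivity needed) follows from the exactness of all rows and columns of \eqref{eqa} via the nine-lemma (cf. \cite{bar}). By Theorem \ref{sg}(\ref{f}) this epimorphism is a fibration, so I may apply the associated long exact sequence of homotopy groups
$$\cdots \to \pi_1(\ker(\alpha_V, \gamma_V)) \to \pi_1(\ker\alpha_V) \to \pi_1(\ker\gamma_V) \to \pi_0(\ker(\alpha_V, \gamma_V)) \to \pi_0(\ker\alpha_V) \to \pi_0(\ker\gamma_V) \to 1.$$

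Next I would identify the terms. By definition $\pi_1(\ker(\alpha_V, \gamma_V)) = \mathcal{V}M(G,M,N)$ and $\pi_1(\ker\alpha_V) = \mathcal{V}M(G,N)$. Since $R_\bullet$ and $S_\bullet$ resolve $G/M$ and $(G/M)/(MN/M) = G/MN$, the map $\gamma_V$ is precisely the one defining the Baer invariant of the pair $(G/M, MN/M)$, so $\pi_1(\ker\gamma_V) = \mathcal{V}M(G/M, MN/M)$. For the two tail terms, the pair sequence identifies $\pi_0(\ker\alpha_V) = N/[NV^*G]$, and the same identification for the pair $(G/M, MN/M)$ gives $\pi_0(\ker\gamma_V) = (MN/M)/[(MN/M)\,V^*(G/M)]$. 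The point still to be checked is the functorial identity $[(MN/M)\,V^*(G/M)] = M[NV^*G]/M$, which should follow from the naturality of the operation $[-\,V^*\,-]$ under the surjection $G \to G/M$ carrying $N$ onto $MN/M$; this reduces $\pi_0(\ker\gamma_V)$ to $MN/M[NV^*G]$ and completes \eqref{triple}. I expect this verbal-margin computation to be the main obstacle, as it requires unwinding the definition of $[-\,V^*\,-]$ and confirming that it commutes with passage to the quotient by $M$.

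Finally, the second sequence is obtained in exactly the same way from the top row of \eqref{eqa}, namely the fibration
$$1 \to \ker(\alpha_V, \gamma_V) \to \ker\beta_V \to \ker\delta_V \to 1.$$
Here $\beta_V$ covers $G \to G/M$, so $\pi_1(\ker\beta_V) = \mathcal{V}M(G,M)$ and $\pi_0(\ker\beta_V) = M/[MV^*G]$, while $\delta_V$ covers $G/N \to (G/N)/(MN/N) = G/MN$ and hence defines the pair $(G/N, MN/N)$, giving $\pi_1(\ker\delta_V) = \mathcal{V}M(G/N, MN/N)$ and, by the same functoriality, $\pi_0(\ker\delta_V) = MN/N[MV^*G]$. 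Since the kernel term $\ker(\alpha_V, \gamma_V)$ is common to both the column and the row of \eqref{eqa}, its $\pi_1$ reappears as $\mathcal{V}M(G,M,N)$ and its $\pi_0$ as the same connecting term, yielding the second long exact sequence.
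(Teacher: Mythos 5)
Your proposal is correct and follows essentially the same route as the paper: the first sequence is the homotopy long exact sequence of the left column of diagram \eqref{eqa}, the second that of the top row, with $\pi_1$ of the kernels identified as the triple/pair Baer invariants and $\pi_0$ identified via the theory of pairs. In fact your explicit identifications $\pi_0(\ker\gamma_V)\cong MN/M[NV^*G]$ and $\pi_0(\ker\delta_V)\cong MN/N[MV^*G]$ (via $[(MN/M)V^*(G/M)]=M[NV^*G]/M$) are the correct ones --- the paper's displayed table of isomorphisms has these two swapped, though the exact sequences it ultimately states agree with yours.
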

\begin{proof}
Consider Diagram \eqref{eqa} corresponding to triple of groups $(G,M,N)$. In \cite{fr} one can find the following isomorphisms
$$\begin{array}{rlcrl}
\pi_0(\ker\alpha_v)&\cong \frac{N}{[NV*G]}&\ \ \ \ \ \ \ &\pi_0(\ker\beta_v)&\cong \frac{M}{[MV*G]}\\
\pi_0(\ker\gamma_v)&\cong \frac{MN}{N[MV*G]}&\ \ \ \ \ \ \  &\pi_0(\ker\delta_v)&\cong \frac{MN}{M[NV*G]}\\
\pi_0(\frac{K_.}{V(K_.)})&\cong \frac{G}{V(G)}&\ \ \ \ \ \ \  &\pi_0(\frac{L_.}{V(L_.)})&\cong \frac{G/N}{V(G/N)}\\
\pi_0(\frac{R_.}{V(R_.)})&\cong \frac{G/M}{V(G/M)}&\ \ \ \ \ \ \  &\pi_0(\frac{S_.}{V(S_.)})&\cong \frac{G/MN}{V(G/MN)}.\\ \end{array}$$

 The left column of Diagram \eqref{eqa} yields the long exact sequence of homotopy groups, using the various isomorphisms we can rewrite it in the group theory language as the exact sequence \eqref{triple}. Also the top row of Diagram \eqref{eqa} induces the other long exact sequence.
\end{proof}
\begin{theorem}
 Let $G=MN$ and $\mathcal{V}$ be the variety of abelian group of exponent q. Then we have the following long exact sequence
  $$\begin{array}{ll}\cdots &\rightarrow\mathcal{V}M(G,M,N)\rightarrow \mathcal{V}M(G,N)\rightarrow \mathcal{V}M(G/M)\\ &\rightarrow {M\cap N}/{M\#_q N}\rightarrow{N}/{[NV^*G]}\rightarrow {G/M}/{V(G/M)}\rightarrow 1,\end{array}$$
 where $M\#_q N$ is the subgroup of $G$ generated by
$$[m,n]t^q;\ \ m\in M,\ n\in N\ and \ t\in M\cap N.$$
\end{theorem}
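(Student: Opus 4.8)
The plan is to deduce the stated sequence from the general triple sequence \eqref{triple} by specialising to the hypotheses $G=MN$ and $\mathcal{V}$ the variety of abelian groups of exponent $q$, whose laws are $\{[x_1,x_2],\,x_1^{q}\}$. A direct evaluation of these laws gives $V(G)=G'G^{q}$ and $[NV^*G]=[N,G]N^{q}$, so that the final two terms of \eqref{triple} become $N/[N,G]N^{q}$ and $MN/M[N,G]N^{q}$. The whole proof then reduces to rewriting three terms of \eqref{triple} under these hypotheses and checking that the connecting maps remain the inclusion-induced ones, so that the shortened sequence stays exact.

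First I would dispose of the two easy reductions. Since $G=MN$ forces $MN/M=G/M$, the pair appearing in \eqref{triple} is $\mathcal{V}M(G/M,G/M)$, and for any group $H$ one has $\mathcal{V}M(H,H)\cong\mathcal{V}M(H)$: in the defining data of $\mathcal{V}M(H,H)$ the auxiliary resolution resolves $H/H=1$, so its $V$-quotient has trivial homotopy in every degree and the relevant kernel is weakly equivalent to $K_./V(K_.)$. Next I would verify $M[N,G]N^{q}=MV(G)$; working modulo $M$ and using $NM=G$ one gets $[N,G]M/M=(G/M)'$ and $N^{q}M/M=(G/M)^{q}$, whence $M[N,G]N^{q}/M=V(G/M)=V(G)M/M$. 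Therefore $MN/M[N,G]N^{q}\cong(G/M)/V(G/M)$, the last nonzero term, and the induced map $N/[N,G]N^{q}\to(G/M)/V(G/M)$ is onto (again because $NM=G$), which accounts for the terminal $1$.

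The heart of the argument is the identification $\pi_0(\ker(\alpha_V,\gamma_V))\cong(M\cap N)/(M\#_q N)$. Here I would first use $G=MN$ to trivialise the lower right corner of \eqref{eqa}: then $G/MN=1$, the resolution $S_.$ resolves the trivial group, $S_./V(S_.)$ has trivial homotopy in all degrees, and the bottom row and right column of \eqref{eqa} give weak equivalences $\ker\gamma_V\simeq R_./V(R_.)$ and $\ker\delta_V\simeq L_./V(L_.)$; in particular $\pi_1(\ker\gamma_V)\cong\mathcal{V}M(G/M)$ and $\pi_0(\ker\gamma_V)\cong(G/M)/V(G/M)$. Feeding these into the homotopy exact sequence of the left column of \eqref{eqa} (an epimorphism of simplicial groups is a fibration and $\pi_*\cong H_*$ of the Moore complex, by Theorem~\ref{sg}) produces the tail
$$\mathcal{V}M(G,N)\to\mathcal{V}M(G/M)\to\pi_0(\ker(\alpha_V,\gamma_V))\to N/[N,G]N^{q}\to(G/M)/V(G/M)\to 1,$$
in which the penultimate map is inclusion-induced. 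It then remains to compute $\pi_0(\ker(\alpha_V,\gamma_V))$ itself. As $\ker(\alpha_V,\gamma_V)=\ker\alpha_V\cap\ker\beta_V$ realises the two kernels $N$ and $M$ simultaneously, I would evaluate its $\pi_0$ on the Moore complex exactly as $\pi_0(\ker\alpha_V)=N/[NV^*G]$ is computed in \cite{fr,mmv}; the double-kernel situation returns $M\cap N$ modulo the verbal subgroup generated by the values of the laws that couple the two normal subgroups, namely $[m,n]$ with $m\in M,\ n\in N$ and $t^{q}$ with $t\in M\cap N$.

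The main obstacle is exactly this last identification: showing the denominator is precisely $[M,N](M\cap N)^{q}=M\#_q N$, neither larger nor smaller. This needs an explicit commutator-and-power calculation in the free simplicial resolution to determine which products of law-values survive to $\pi_0$, together with the group-theoretic reduction that eliminates the a priori term $[M\cap N,G]$: since $G=MN$ one has $[M\cap N,G]=[M\cap N,MN]\subseteq[M\cap N,M][M\cap N,N]\subseteq[M,N]$, so the $G$-commutator relations are absorbed into $[M,N]$ and the denominator collapses to $M\#_q N$. Finally I would check the naturality of the three replacements, so that the maps of \eqref{triple} descend to the inclusion-induced maps among the rewritten terms and the displayed sequence is exact.
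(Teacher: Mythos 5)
Your outer reductions are fine and agree with what the paper does implicitly: with $G=MN$ the term $\mathcal{V}M(G/M,MN/M)$ becomes $\mathcal{V}M(G/M,G/M)\cong\mathcal{V}M(G/M)$, the last term $MN/M[NV^*G]$ becomes $(G/M)/V(G/M)$ (your computation $M[N,G]N^q/M=V(G/M)$ is correct for this variety), and the sequence itself is just the specialization of \eqref{triple}. But the heart of the theorem is exactly the step you yourself label ``the main obstacle'' and never carry out: the identification $\pi_0(\ker(\alpha_V,\gamma_V))\cong (M\cap N)/(M\#_q N)$. Since \eqref{triple} is already available from the previous theorem, this identification \emph{is} the entire new content of the statement; leaving it as a sketch (``evaluate $\pi_0$ on the Moore complex exactly as for $\ker\alpha_V$'' plus the remark that one must show the denominator is ``neither larger nor smaller'') means the proof is not done. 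Your inclusion $[M\cap N,G]\subseteq[M,N]$ when $G=MN$ is a correct and relevant group-theoretic observation, but it only explains why no $[M\cap N,G]$-term appears in the answer; it does not establish that the subgroup of $M\cap N$ killed in $\pi_0$ of the double kernel is precisely $[M,N](M\cap N)^q$, which requires an actual computation with the Moore complex of $\ker(\alpha_V,\gamma_V)$ (in particular, controlling which boundaries from degree one land in $M\cap N$).

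The paper closes this gap not by computation but by citation: Barja and Rodr\'{\i}guez \cite{bar} proved $\pi_0(\ker(\alpha_V,\gamma_V))\cong (M\cap N)/(M\#_q N)$ for the variety of abelian groups of exponent $q$, and the paper's proof consists of quoting this and plugging it into \eqref{triple}. So a correct write-up along your lines would either invoke \cite{bar} at that point, or reproduce the Barja--Rodr\'{\i}guez argument in full --- which is a genuine piece of work (it is the key input to their eight-term exact sequence), not the routine verification your proposal treats it as. As it stands, your proposal correctly reduces the theorem to the hard step and then stops at it.
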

\begin{proof}
 Barja and Rodriguez in \cite{bar} obtained
$ \pi_0(\alpha_V,\gamma_V)\cong {M\cap N}/{M\#_q N}$
which gives the result.
\end{proof}

 The long exact sequence of \eqref{triple} implies the following theorem.
 \begin{theorem}\label{ps}
The Baer invariant of a pair of groups is a special case of the Baer invariant of a triple of groups i.e. $\mathcal{V}M(G,M,M)\cong \mathcal{V}M(G,M)\cong \mathcal{V}M(G,G,M)$ for any group $G$ and any normal subgroup $M$.
\end{theorem}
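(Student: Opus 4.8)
The plan is to read off both isomorphisms from the long exact sequence \eqref{triple}, following the hint, by specializing it to the two degenerate triples and checking that the intervening terms collapse. Recall that \eqref{triple} is the homotopy long exact sequence of the left column of Diagram \eqref{eqa}, i.e. of the fibration $1\to\ker(\alpha_V,\gamma_V)\to\ker\alpha_V\to\ker\gamma_V\to1$, so that $\pi_1(\ker\alpha_V)=\mathcal{V}M(G,N)$ and $\pi_1(\ker\gamma_V)=\mathcal{V}M(G/M,MN/M)$, while the term immediately preceding $\mathcal{V}M(G,M,N)=\pi_1(\ker(\alpha_V,\gamma_V))$ is $\pi_2(\ker\gamma_V)$. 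For the triple $(G,M,M)$ I set $N=M$; then $MN=M$, so the pair in the third slot is $(G/M,MN/M)=(G/M,1)$ and the map $\gamma_V$ covers the identity $G/M\to G/M$. Choosing the resolution $S_.$ of $G/MN=G/M$ equal to the resolution $R_.$ of $G/M$ with $\gamma=\mathrm{id}$ makes $\ker\gamma_V$ the trivial simplicial group, so $\pi_n(\ker\gamma_V)=0$ for all $n$; in particular $\pi_1(\ker\gamma_V)=\mathcal{V}M(G/M,1)=0$ and $\pi_2(\ker\gamma_V)=0$. Exactness of
$$0=\pi_2(\ker\gamma_V)\to\mathcal{V}M(G,M,M)\to\mathcal{V}M(G,M)\to\pi_1(\ker\gamma_V)=0$$
then forces $\mathcal{V}M(G,M,M)\cong\mathcal{V}M(G,M)$.

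I would handle $\mathcal{V}M(G,G,M)$ by the identical mechanism, now specializing the symbols $M,N$ of \eqref{triple} to $G$ and $M$ respectively. For this triple the quotients $G/M$ and $G/MN$ become $G/G$ and $G/(GM)=G/G$, both trivial, so $R_.$ and $S_.$ both resolve the trivial group; taking them equal with $\gamma=\mathrm{id}$ again makes $\ker\gamma_V$ homotopically trivial. The relevant segment of \eqref{triple} then reads
$$0\to\mathcal{V}M(G,G,M)\to\mathcal{V}M(G,M)\to\mathcal{V}M(1,1)=0,$$
whence $\mathcal{V}M(G,G,M)\cong\mathcal{V}M(G,M)$. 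Chaining the two isomorphisms gives $\mathcal{V}M(G,M,M)\cong\mathcal{V}M(G,M)\cong\mathcal{V}M(G,G,M)$, as claimed.

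The one place requiring genuine care — the main, if modest, obstacle — is the justification that $\ker\gamma_V$ is homotopically trivial, equivalently that $\gamma_V$ may be arranged to cover the identity of a single group via equal resolutions. This rests on the independence of the whole construction from the chosen free simplicial resolutions (uniqueness up to homotopy, together with the comparison and lifting of resolutions, as in \cite{fr,in}), which permits building Diagram \eqref{eqa} with $S_.=R_.$ and $\gamma=\mathrm{id}$ in each degenerate case; with that choice $\ker\gamma_V$ is literally the trivial simplicial group and no separate weak-equivalence estimate is needed. Once this is in place, the remainder is a purely formal reading of \eqref{triple} after substitution.
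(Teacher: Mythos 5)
Your proposal is correct and takes essentially the same route as the paper, whose entire proof is the remark that the long exact sequence \eqref{triple} implies the theorem: you specialize \eqref{triple} to the triples $(G,M,M)$ and $(G,G,M)$ and observe that the flanking terms vanish. Your extra care in arranging $S_.=R_.$ with $\gamma=\mathrm{id}$, so that $\ker\gamma_V$ is the trivial simplicial group and hence $\pi_1(\ker\gamma_V)$ and $\pi_2(\ker\gamma_V)$ vanish, is exactly the detail the paper leaves implicit (resting, as you note, on independence of the construction from the choice of free simplicial resolutions).
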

\begin{theorem}
Let $\{(G_i,M_i,N_i)\}_{i\in I}$ be a directed system of triples
of groups. Then
$${\mathcal{V}M(\varinjlim G_i,\varinjlim M_i,\varinjlim N_i)\cong \varinjlim \mathcal{V} M(G_i,M_i,N_i)}.$$
\end{theorem}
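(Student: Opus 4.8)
The plan is to exploit the functoriality of free simplicial resolutions together with two standard facts about filtered colimits: that they commute with finite limits in the category of groups, and that they commute with the formation of homotopy groups of simplicial groups. Write $(G,M,N)=(\varinjlim G_i,\varinjlim M_i,\varinjlim N_i)$. For each index $i$ I would fix the \emph{canonical} free simplicial resolution $K_.^{(i)}$ of $G_i$ arising from the free--forgetful comonad on groups, and likewise $L_.^{(i)},R_.^{(i)},S_.^{(i)}$ for $G_i/N_i$, $G_i/M_i$ and $G_i/M_iN_i$. Since a homomorphism of triples induces compatible maps of these canonical resolutions, the whole of Diagram \eqref{eqa} becomes natural in the index $i$, and one may pass to the direct limit over the directed system.

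First I would verify that the canonical resolution itself commutes with direct limits and remains a valid resolution. The free group functor is a left adjoint, hence preserves all colimits; as the comonadic resolution is assembled degreewise from iterated free functors, this gives $\varinjlim K_.^{(i)}\cong K_.$, a degreewise-free simplicial group with $\pi_0=\varinjlim G_i=G$. Because homology commutes with filtered colimits and each $K_.^{(i)}$ is acyclic in positive degrees, the colimit $K_.$ is acyclic there as well, so it is a genuine free simplicial resolution of $G$; the same holds for $L_.$, $R_.$ and $S_.$.

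Next I would check that each entry of Diagram \eqref{eqa} for $(G,M,N)$ is the direct limit of the corresponding entries for the $(G_i,M_i,N_i)$. The verbal subgroup functor $V(-)$ commutes with filtered colimits, since any instance of a law in $V(\varinjlim K_.^{(i)})$ involves only finitely many elements and therefore already lives in some $V(K_.^{(i)})$; hence $K_./V(K_.)\cong\varinjlim K_.^{(i)}/V(K_.^{(i)})$, and the simplicial maps $\alpha_V,\beta_V,\gamma_V,\delta_V$ are the colimits of their indexed counterparts. Because filtered colimits commute with finite limits (in particular with kernels) in the category of groups, the top-left corner of the diagram is preserved, yielding $\ker(\alpha_V,\gamma_V)\cong\varinjlim\ker(\alpha_V^{(i)},\gamma_V^{(i)})$ degreewise, and so as simplicial groups.

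Finally, invoking part (\ref{m}) of Theorem \ref{sg} to replace $\pi_1$ by Moore-complex homology, and using that homology commutes with filtered colimits, I obtain
$$\mathcal{V}M(G,M,N)=\pi_1\big(\ker(\alpha_V,\gamma_V)\big)\cong\varinjlim\pi_1\big(\ker(\alpha_V^{(i)},\gamma_V^{(i)})\big)=\varinjlim\mathcal{V}M(G_i,M_i,N_i),$$
as desired. The main obstacle is the first step: one must guarantee a \emph{functorial} choice of free resolution that simultaneously commutes with direct limits and stays acyclic in the limit, so that the defining Diagram \eqref{eqa} is honestly natural in $i$. The comonadic resolution supplies the functoriality for free, and the preservation of acyclicity reduces, again via part (\ref{m}) of Theorem \ref{sg}, to the exactness of filtered colimits; once these points are settled, the remaining identifications are purely formal.
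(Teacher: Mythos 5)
Your proposal is correct, and its overall strategy coincides with the paper's: pass the direct limit through every stage of Diagram \eqref{eqa} and then use the fact that homotopy groups of simplicial groups commute with direct limits to identify $\pi_1$ of the limiting kernel with the limit of the Baer invariants. The differences lie in how the intermediate steps are justified. The paper cites its companion results: \cite{mmv} for $\varinjlim\pi_n(K_{i_.})\cong\pi_n(\varinjlim K_{i_.})$ (whence $\varinjlim K_{i_.}$, etc., are resolutions of the limit groups), and \cite[Theorem 2.7]{mmv2} for $\varinjlim(\ker\alpha_{V_i})\cong\ker\varinjlim(\alpha_{V_i})$, and then handles the double kernel $\ker(\alpha_V,\gamma_V)$ by comparing two short exact sequences via the Five Lemma. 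You instead derive all the kernel identifications at once from the general fact that filtered colimits commute with finite limits in groups, which subsumes both the single-kernel step and the Five Lemma step, and you add two points the paper leaves implicit: the explicit verification that verbal subgroups and quotients $K_./V(K_.)$ commute with filtered colimits, and, more substantially, a functorial choice of resolution (the comonadic one) guaranteeing both that Diagram \eqref{eqa} is natural in $i$ and that $\varinjlim K_{i_.}$ is again degreewise \emph{free} -- a point that is not automatic for colimits of free groups along arbitrary maps and which the paper's phrase ``are simplicial groups corresponding to $\varinjlim G_i$'' glosses over. So your argument is slightly more self-contained and rigorous at exactly the places where the paper defers to citations or leaves details to the reader, at the cost of committing to one specific resolution rather than working with arbitrary ones.
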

\begin{proof}
For any $i\in I$, let $K_{i_.}, L_{i_.}, R_{i_.}$ and $S_{i_.}$ be corresponding free simplicial resolutions of $G_i, G_i/N_i, G_i/M_i$ and $G_i/M_iN_i $, respectively. Assume the following diagram of simplicial groups
$$
\begin{array}{ccc}
\frac{K_{i_.}}{V(K_{i_.})} &\stackrel{\alpha_{V_i}}{\rightarrow}& \frac{L_{i_.}}{V(L_{i_.})}\\ {\beta_{V_i}}\downarrow\ \ \ \ & & \ \ \ \  \downarrow{\delta_{V_i}}\\ \frac{R_{i_.}}{V(R_{i_.})}&\stackrel
{\gamma_{V_i}}{\rightarrow}& \frac{S_{i_.}}{V(S_{i_.})}.
\end{array}
$$
 The authors in \cite{mmv} proved that $\varinjlim \pi_n(K_{i_.})\cong \pi_n(\varinjlim K_{i_.})$, where $K_{i_.}$ are simplicial groups. Hence $\varinjlim K_{i_.},\ \varinjlim L_{i_.},\  \varinjlim R_{i_.}$ and $\varinjlim S_{i_.}$ are simplicial groups corresponding to $\varinjlim G_i,\ \varinjlim G_i/N_i,\ \varinjlim G_i/M_i$ and $\varinjlim G_i/M_iN_i$, respectively.
Also, the authors \cite[Theorem 2.7]{mmv2} proved that $\varinjlim (\ker {\alpha_{V_i}})\cong  \ker \varinjlim({\alpha_{V_i}})$ and similarly $\varinjlim (\ker {\gamma_{V_i}})\cong  \ker \varinjlim({\gamma_{V_i}}).$
 Now consider the following commutative diagram:
$$\begin{array}{rcccccl}1\rightarrow&\ker ((\varinjlim{\alpha_{V_i}},\varinjlim{\gamma_{V_i}}))&\rightarrow&\ker \varinjlim({\gamma_{V_i}})&\rightarrow& \ker \varinjlim({\alpha_{V_i}})&\rightarrow1\\
&\downarrow&&\downarrow&&\downarrow&\\ 1\rightarrow&\varinjlim (\ker ({\alpha_{V_i}},{\gamma_{V_i}}))&\rightarrow&\varinjlim (\ker {\gamma_{V_i}})&\rightarrow&\varinjlim (\ker {\alpha_{V_i}})&\rightarrow1.\end{array}$$
Similar to the proof of \cite[Theorem 2.7]{mmv2}, the Five Lemma implies that
$$\varinjlim (\ker ({\alpha_{V_i}},{\gamma_{V_i}}))\cong  \ker ((\varinjlim{\alpha_{V_i}},\varinjlim{\gamma_{V_i}})).$$
Since homotopy groups of simplicial groups commute with direct limits \cite{mmv}, the following isomorphisms hold
  $$\begin{array}{cc}\mathcal{V}M(\varinjlim G_i,\varinjlim M_i,\varinjlim N_i)&\cong\pi_1\big( \ker (\varinjlim{\alpha_{V_i}},\varinjlim{\gamma_{V_i}})\big) \\ \cong\pi_1\Big(\varinjlim \big(\ker ({\alpha_{V_i}},{\gamma_{V_i}})\big)\Big)&\cong  \varinjlim \mathcal{V} M(G_i,M_i,N_i).\end{array}$$
\end{proof}
Now we study the behavior of Baer invariants of triples of free products.
\begin{theorem}\label{2}
Let $(G_i,M_i,N_i)$ be triples
of groups, for $i=1,2$. Then the following isomorphism holds
$$ M(G_1 \ast G_2,\langle M_1\ast M_2\rangle^{G_1\ast G_2},\langle N_1\ast N_2\rangle^{G_1\ast G_2})\cong M(G_1,M_1,N_1)\oplus M(G_2,M_2,N_2).$$
\end{theorem}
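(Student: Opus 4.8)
The plan is to reduce the statement to two standard facts: that free simplicial resolutions are compatible with free products, and that abelianization carries free products to direct sums. Note first that here $\mathcal{V}$ is the variety of abelian groups, so the verbal subgroup functor is the commutator and $K_./V(K_.)=K_.^{ab}$ in each degree. For each $i=1,2$ I would fix free simplicial resolutions $K_.^{(i)},L_.^{(i)},R_.^{(i)},S_.^{(i)}$ of $G_i,\ G_i/N_i,\ G_i/M_i$ and $G_i/M_iN_i$, producing two copies of diagram \eqref{eqa} with structure maps $\alpha_V^{(i)},\beta_V^{(i)},\gamma_V^{(i)},\delta_V^{(i)}$. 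Writing $N=\langle N_1\ast N_2\rangle^{G_1\ast G_2}$ and $M=\langle M_1\ast M_2\rangle^{G_1\ast G_2}$, the universal property of the free product together with the definition of the normal closure gives
$$(G_1\ast G_2)/N\cong (G_1/N_1)\ast (G_2/N_2),\quad (G_1\ast G_2)/M\cong (G_1/M_1)\ast (G_2/M_2),$$
and $(G_1\ast G_2)/MN\cong (G_1/M_1N_1)\ast (G_2/M_2N_2)$. The degreewise free products $K_.^{(1)}\ast K_.^{(2)}$, $L_.^{(1)}\ast L_.^{(2)}$, $R_.^{(1)}\ast R_.^{(2)}$ and $S_.^{(1)}\ast S_.^{(2)}$ are free simplicial groups and are the required free simplicial resolutions of these four free-product groups.

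Next I would apply the abelianization. Degreewise $(A\ast B)^{ab}\cong A^{ab}\times B^{ab}$, so every simplicial abelian group occurring in diagram \eqref{eqa} for the triple $(G_1\ast G_2,M,N)$ splits as the direct product of the two corresponding simplicial abelian groups for $(G_1,M_1,N_1)$ and $(G_2,M_2,N_2)$; for instance $\frac{K_.}{V(K_.)}\cong \frac{K_.^{(1)}}{V(K_.^{(1)})}\times \frac{K_.^{(2)}}{V(K_.^{(2)})}$, and likewise for $L_.,R_.,S_.$. The induced maps $\alpha_V,\beta_V,\gamma_V,\delta_V$ are the direct products of the $\alpha_V^{(i)}$, and so on, so they respect this splitting. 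Hence the entire diagram \eqref{eqa} for the free-product triple is the direct product of the two diagrams for $(G_i,M_i,N_i)$, and in particular $\ker(\alpha_V,\gamma_V)\cong \ker(\alpha_V^{(1)},\gamma_V^{(1)})\times \ker(\alpha_V^{(2)},\gamma_V^{(2)})$ as simplicial abelian groups. Since $\pi_1$ commutes with finite direct products of simplicial abelian groups, applying it gives $M(G_1\ast G_2,M,N)\cong M(G_1,M_1,N_1)\oplus M(G_2,M_2,N_2)$.

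The step I expect to be the main obstacle is verifying that $K_.^{(1)}\ast K_.^{(2)}$ is genuinely a free simplicial resolution, i.e. that it is aspherical with $\pi_0$ equal to $G_1\ast G_2$. The computation of $\pi_0$ and degreewise freeness are immediate; the content lies in the vanishing of higher homotopy. I would obtain this either homotopically, using that the free product is the coproduct of cofibrant objects in the standard model structure on simplicial groups and hence a homotopy coproduct that preserves the weak equivalences $K_.^{(i)}\simeq G_i$, or topologically, via $B(G_1\ast G_2)\simeq BG_1\vee BG_2$ together with the fact that a wedge of aspherical complexes is aspherical. This is also exactly the place where the restriction to the Schur multiplier is essential: for a general variety $\mathcal{V}$ the quotient $(A\ast B)/V(A\ast B)$ does not decompose as $(A/V(A))\times (B/V(B))$, so the direct-sum splitting of the second step breaks down.
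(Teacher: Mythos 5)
Your proof is correct and takes essentially the same route as the paper: the paper likewise forms the degreewise free products $K_{1_.}\ast K_{2_.}$, $L_{1_.}\ast L_{2_.}$, $R_{1_.}\ast R_{2_.}$, $S_{1_.}\ast S_{2_.}$, invokes the Van Kampen theorem for simplicial groups (citing Burns--Ellis) to see that these are free simplicial resolutions of the corresponding free products, splits the abelianizations as $(K_{1_.}\ast K_{2_.})^{ab}\cong K_{1_.}^{ab}\oplus K_{2_.}^{ab}$, and deduces $\ker(\alpha_1\oplus\alpha_2,\gamma_1\oplus\gamma_2)\cong\ker(\alpha_1,\gamma_1)\oplus\ker(\alpha_2,\gamma_2)$ before applying $\pi_1$. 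The only difference is one of detail: you spell out the identifications of the quotients of $G_1\ast G_2$ by the normal closures and supply the asphericity argument yourself (via cofibrancy in the model structure, or $B(G_1\ast G_2)\simeq BG_1\vee BG_2$), where the paper simply cites the literature for this step.
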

 \begin{proof}
Van-Kampen theorem for simplicial groups implies that $K_{1_.}\ast K_{2_.}$, $L_{1_.}\ast L_{2_.}$, $R_{1_.}\ast R_{2_.}$ and $S_{1_.}\ast S_{2_.}$ are free simplicial resolutions corresponding to $G_1\ast G_2$, $(G_1/N_1\ast G_2/N_2)$, $(G_1/M_1\ast G_2/M_2) $ and $(G_1/M_1N_1\ast G_2/M_2N_2)$, respectively \cite{bur}.
Since $K_{1_.}$ and $K_{2_.}$ are free groups, we have $(K_{1_.}\ast K_{2_.})^{ab}\cong K_{1_.}^{ab}\oplus K_{2_.}^{ab}$. Hence we have the following commutative diagram
 $$\begin{array}{ccc}K_{1_.}^{ab}\oplus K_{2_.}^{ab}&\stackrel{\alpha_{1}\oplus\alpha_{2}}{\rightarrow}&L_{1_.}^{ab}\oplus L_{2_.}^{ab}\\
\downarrow& &\downarrow\\ R_{1_.}^{ab}\oplus R_{2_.}^{ab}&\stackrel{\gamma_{1}\oplus\gamma_{2}}{\rightarrow}&S_{1_.}^{ab}\oplus S_{2_.}^{ab}.
 \end{array} $$
  Consequently, $\ker({\alpha_{1}\oplus\alpha_{2}},{\gamma_{1}\oplus\gamma_{2}})\cong \ker(\alpha_{1},\gamma_{1})\oplus\ker(\alpha_{2},\gamma_{2})$ which completes the proof.

 \end{proof}
 \begin{lemma}\label{l}
 Let $G_1$ and $G_2$ be two groups with $(|G_1^{ab}|,|G_2^{ab}|)=1$. Then for all $c\geq 1$ the following isomorphism holds
 $$\pi_n(\frac{K_{1_.}*K_{2_.}}{\gamma_c(K_{1_.}*K_{2_.})})\cong\pi_n(\frac{K_{1_.}}{\gamma_c(K_{1_.})})
 \oplus\pi_n(\frac{K_{2_.}}{\gamma_c(K_{2_.})}),$$
 where $K_{1_.}$ and $K_{2_.}$ are free simplicial resolutions of $G_1$ and $G_2$, respectively.
 \end{lemma}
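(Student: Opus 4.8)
The variety implicit here is that of nilpotent groups of class at most $c-1$, so that $V(-)=\gamma_c(-)$. The plan is to compare the free product with the direct product through the lower central filtration. First I would record, exactly as in the proof of Theorem~\ref{2}, that the Van Kampen theorem for simplicial groups makes $K_{1_.}*K_{2_.}$ a free simplicial resolution of $G_1*G_2$, so that every quotient appearing in the statement is a genuine free/nilpotent simplicial object, and the projections $G_1*G_2\to G_i$ induce a simplicial homomorphism
$$\Phi:\frac{K_{1_.}*K_{2_.}}{\gamma_c(K_{1_.}*K_{2_.})}\longrightarrow \frac{K_{1_.}}{\gamma_c(K_{1_.})}\times\frac{K_{2_.}}{\gamma_c(K_{2_.})}.$$
The goal is to show $\Phi$ is an isomorphism on every $\pi_n$; since $\pi_n$ of a simplicial group is abelian by Theorem~\ref{sg}(\ref{a}), the target product is then the claimed direct sum.

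The central idea is to filter the source of $\Phi$ by the lower central series of the free product. In each simplicial degree $n$ the group $K_{1_n}*K_{2_n}$ is free (a free product of free groups), so its lower central factors are computed by the free Lie functor: $\gamma_i/\gamma_{i+1}\cong L_i(K_{1_n}^{ab}\oplus K_{2_n}^{ab})$, and this admits the multidegree decomposition $L_i(A\oplus B)\cong\bigoplus_{p+q=i}L_{p,q}(A,B)$, with $L_{i,0}=L_i(A)$, $L_{0,i}=L_i(B)$ and mixed pieces $L_{p,q}$ for $p,q\ge 1$. Assembling these over $n$ gives a finite, natural filtration whose associated graded simplicial abelian groups are the $L_{p,q}(K_{1_.}^{ab},K_{2_.}^{ab})$ for $1\le p+q\le c-1$; the analogous filtration of the target has associated graded only the unmixed pieces $L_{p,0}\oplus L_{0,q}$. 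Thus $\Phi$ is a $\pi_*$-isomorphism as soon as every mixed piece $L_{p,q}(K_{1_.}^{ab},K_{2_.}^{ab})$ with $p,q\ge 1$ is homotopically trivial.

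To kill the mixed pieces I would use that each $L_{p,q}$ with $p,q\ge1$ is functorially a subquotient of $(K_{1_.}^{ab})^{\otimes p}\otimes(K_{2_.}^{ab})^{\otimes q}$, which contains $K_{1_.}^{ab}\otimes K_{2_.}^{ab}$ as a tensor factor. By Theorem~\ref{sg}(\ref{n}) together with the K\"unneth theorem, tensoring a homotopically trivial simplicial abelian group with any other is again homotopically trivial, so the whole question reduces to showing that the single simplicial abelian group $K_{1_.}^{ab}\otimes K_{2_.}^{ab}$ has vanishing homotopy. Theorem~\ref{sg}(\ref{n}) with K\"unneth writes its $\pi_*$ in terms of tensor and $\mathrm{Tor}$ of $\pi_*(K_{1_.}^{ab})$ and $\pi_*(K_{2_.}^{ab})$, whose bottom terms are $G_1^{ab}$ and $G_2^{ab}$, and the coprimality hypothesis $(|G_1^{ab}|,|G_2^{ab}|)=1$ is what makes these vanish. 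Finally, the lower central filtration being of finite length $c-1$ and natural, I would run the long exact homotopy sequences of the resulting fibrations (Theorem~\ref{sg}(\ref{f})) and induct on filtration length to promote the vanishing of the mixed associated graded into a genuine $\pi_*$-isomorphism for $\Phi$, yielding the stated direct sum.

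I expect the real content to sit in that reduction. Coprimality of the abelianizations is transparent at the $\pi_0$-level, but $\pi_*(K_{i_.}^{ab})$ also encodes the higher homology of $G_i$, so the delicate point is to confirm that the hypothesis suffices to annihilate \emph{every} tensor and $\mathrm{Tor}$ term in $\pi_*(K_{1_.}^{ab}\otimes K_{2_.}^{ab})$, and then that the finiteness and naturality of the filtration rule out any extension obstruction to splitting the homotopy groups.
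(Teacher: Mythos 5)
Your plan is, in substance, the paper's own proof: the paper also compares the free product with the direct product via an exact sequence
$1\to\ker\phi_c\to\frac{K_{1_.}*K_{2_.}}{\gamma_c(K_{1_.}*K_{2_.})}\to\frac{K_{1_.}\times K_{2_.}}{\gamma_c(K_{1_.}\times K_{2_.})}\to1$,
filters $\ker\phi_c$ by the lower central series so that the successive quotients are direct sums of pieces $(K_{1_.}^{ab})^{\otimes i}\otimes(K_{2_.}^{ab})^{\otimes j}$ with $i,j\geq1$ (your mixed Lie pieces $L_{p,q}$ in different notation), and then kills these pieces using Theorem \ref{sg}(4) (Eilenberg--Zilber/K\"unneth) together with $\pi_i(K_.^{ab})\cong H_{i+1}(G)$, finishing by induction on $c$ and long exact homotopy sequences. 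So structurally you have reconstructed the paper's argument.

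The genuine gap is exactly the step you postponed as ``the delicate point'': that coprimality of $|G_1^{ab}|$ and $|G_2^{ab}|$ annihilates \emph{every} tensor and Tor term of $\pi_*(K_{1_.}^{ab}\otimes K_{2_.}^{ab})$. This is where all the content sits, and it is not merely delicate --- it fails under the stated hypothesis. Coprimality kills only the terms built from $\pi_0$, namely $G_1^{ab}\otimes G_2^{ab}$ and $\mathrm{Tor}(G_1^{ab},G_2^{ab})$; the remaining terms are $H_{i+1}(G_1)\otimes H_{j+1}(G_2)$ (and corresponding Tor's) with $i+j\geq1$, and higher homology is not controlled by the abelianization. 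Concretely, take $G_1=\mathbf{Z}/2$ and $G_2=\mathbf{Z}/3\times A_5$: then $(|G_1^{ab}|,|G_2^{ab}|)=1$, yet
$\pi_1(K_{1_.}^{ab}\otimes K_{2_.}^{ab})\supseteq G_1^{ab}\otimes H_2(G_2)\cong\mathbf{Z}/2\otimes\mathbf{Z}/2\neq0$.
For $c=3$ one has $\ker\phi_3\cong K_{1_.}^{ab}\otimes K_{2_.}^{ab}$, and since every element of $\pi_*$ of the base lifts through the factor inclusions $K_{i_.}\to K_{1_.}*K_{2_.}$, both connecting homomorphisms vanish, giving a short exact sequence
$0\to\mathbf{Z}/2\to\pi_1\big(\frac{K_{1_.}*K_{2_.}}{\gamma_3(K_{1_.}*K_{2_.})}\big)\to\pi_1\big(\frac{K_{1_.}}{\gamma_3(K_{1_.})}\big)\oplus\pi_1\big(\frac{K_{2_.}}{\gamma_3(K_{2_.})}\big)\to0$
with finite end terms; so the claimed isomorphism fails at $c=3$, $n=1$. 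Two remarks. First, the paper's own proof commits exactly this error: it simply writes $\bigoplus H_{i+1}(G_1)\otimes H_{j+1}(G_2)\oplus\bigoplus\mathrm{Tor}(H_{i+1}(G_1),H_{j+1}(G_2))=0$ with no justification; the argument (yours and the paper's) does go through under a hypothesis making \emph{all} the homology coprime torsion, e.g.\ $G_1,G_2$ finite of coprime orders. Second, a smaller flaw in your reduction: you pass from the tensor pieces to $L_{p,q}$ by calling the latter ``subquotients'' of the former, but a subquotient of a homotopically trivial simplicial abelian group need not be homotopically trivial (kernels shift homotopy degrees); what you need, and what holds by the basic-commutator (Hall) basis, is that the mixed part is naturally a direct \emph{sum} of tensor pieces, exactly as the paper states.
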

 \begin{proof}
First, we consider the following exact sequence which is proved in \cite{mmv}
\begin{equation}\label{ker} 1\rightarrow\ker{\phi_c}\rightarrow \frac{K_{1_.}\ast K_{2_.}}{\gamma_c(K_{1_.}\ast K_{2_.})}\rightarrow\frac{K_{1_.}\times K_{2_.}}{\gamma_c(K_{1_.}\times K_{2_.})}\rightarrow1.\end{equation}
Note that $\ker{\phi}_c$ satisfies in the following exact sequence
\begin{equation}\label{tensor} 1\rightarrow \frac{[L_{1_.},K_{2_.},_{c-2}F]^F}{[K_{1_.},K_{2_.},_{c-1}F]^F} \rightarrow \ker{\phi}_c\rightarrow\ker{\phi}_{c-1}\rightarrow 1,\end{equation}
where $F_.=K_{1_.}*K_{2_.}$. Moreover
 $$ \frac{[K_{1_.},K_{2_.},_{c-2}F]^F}{[K_{1_.},K_{2_.},_{c-1}F]^F}\cong\oplus\sum_{\substack{for\ some\ i+j=c}} {\underbrace{K_{1_.}^{ab}\otimes...\otimes K_{1_.}^{ab}}_{i-times}} \otimes\displaystyle{\underbrace{K_{2_.}^{ab}\otimes...\otimes K_{2_.}^{ab}}_{j-times}}.$$
Fix $n $, now by induction on $c$, we show that  $\pi_n(\ker\phi_c)$ is trivial.
For $c=2$, Theorem \ref{2} shows that $\ker\phi_c=0$.

Let $c>2$, using Theorem \ref{sg} we have
$$\begin{array}{ll}\pi_n(K_{1_.}^{ab}\otimes K_{2_.}^{ab})&\cong \oplus\sum_{\substack{i+j=n}}\pi_i(K_{1_.}^{ab})\otimes
\pi_{j}(K_{2_.}^{ab})\\&\oplus\sum_{\substack{i+j=n-1}}Tor(\pi_i(K_{1_.}^{ab}),
\pi_{j}(K_{2_.}^{ab}))\\&\cong \oplus\sum_{\substack{i+j=n}}H_{i+1}(G_1)\otimes
H_{j+1}(G_2)\\&\oplus\sum_{\substack{i+j=n-1}}Tor(H_{i+1}(G_1),H_{j+1}(G_2))=0 ,\end{array} $$
and
$$\begin{array}{ll}&\pi_n({\underbrace{K_{1_.}^{ab}\otimes...\otimes K_{1_.}^{ab}}_{i-times}} \otimes\displaystyle{\underbrace{K_{2_.}^{ab}\otimes...\otimes K_{2_.}^{ab}}_{j-times}})\cong\\& \oplus\sum_{\substack{r+s=n}}\pi_r(K_{1_.}^{ab}\otimes K_{2_.}^{ab})\otimes\pi_s({\underbrace{K_{1_.}^{ab}\otimes...\otimes K_{1_.}^{ab}}_{i-1-times}} \otimes\displaystyle{\underbrace{K_{2_.}^{ab}\otimes...\otimes K_{2_.}^{ab}}_{j-1-times}}) \\&
\oplus\sum_{\substack{r+s=n-1}}Tor(\pi_r(K_{1_.}^{ab}\otimes K_{2_.}^{ab}),\pi_s({\underbrace{K_{1_.}^{ab}\otimes...\otimes K_{1_.}^{ab}}_{i-1-times}} \otimes\displaystyle{\underbrace{K_{2_.}^{ab}\otimes...\otimes K_{2_.}^{ab}}_{j-1-times}}))\\
&=0.
\end{array}$$
Therefore the induction hypothesis and the exact sequence \eqref{tensor} imply that $\pi_n(\ker\phi_{c})$ is trivial for all $n$.

The exact sequence \eqref{ker} gives rise the following long exact sequence of homotopy groups
$$\pi_n(\ker{\phi_c})\rightarrow \pi_n(\frac{K_{1_.}\ast K_{2_.}}{\gamma_c(K_{1_.}\ast K_{2_.})})\rightarrow\pi_n(\frac{K_{1_.}\times K_{2_.}}{\gamma_c(K_{1_.}\times K_{2_.})})\rightarrow\pi_{n-1}(\ker{\phi_c}).$$
Hence the result holds.
\end{proof}
\begin{theorem}
Let $(G_i,M_i,N_i)$ be triples
of groups, for $i=1,2$, such that $(|G_1^{ab}|,|G_2^{ab}|)=1$. Then for all $c\geq 1$
$$ M^{(c)}(G_1 \ast G_2,\langle M_1\ast M_2\rangle^{G_1\ast G_2},\langle N_1\ast N_2\rangle^{G_1\ast G_2})\cong M^{(c)}(G_1,M_1,N_1)\oplus M^{(c)}(G_2,M_2,N_2).$$
\end{theorem}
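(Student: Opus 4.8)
The plan is to adapt the proof of Theorem \ref{2} by replacing the abelianization functor with the $c$-nilpotent quotient $K_.\mapsto K_./\gamma_{c+1}(K_.)$, and to feed in the homotopy splitting supplied by Lemma \ref{l}. Throughout I take $\mathcal{V}=\mathcal{N}_c$, so that $V(K_.)=\gamma_{c+1}(K_.)$ and $M^{(c)}(G,M,N)=\pi_1\big(\ker(\alpha_V,\gamma_V)\big)$, with diagram \eqref{eqa} formed from free simplicial resolutions $K_.,L_.,R_.,S_.$ of $G_1\ast G_2$, $(G_1/N_1)\ast(G_2/N_2)$, $(G_1/M_1)\ast(G_2/M_2)$ and $(G_1/M_1N_1)\ast(G_2/M_2N_2)$.

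First I would set up the simplicial data exactly as in Theorem \ref{2}: by the van Kampen theorem for simplicial groups \cite{bur}, the free products $K_{1_.}\ast K_{2_.}$, $L_{1_.}\ast L_{2_.}$, $R_{1_.}\ast R_{2_.}$ and $S_{1_.}\ast S_{2_.}$ are free simplicial resolutions of the four free products above, so diagram \eqref{eqa} for the triple $(G_1\ast G_2,\langle M_1\ast M_2\rangle^{G_1\ast G_2},\langle N_1\ast N_2\rangle^{G_1\ast G_2})$ is built on these. Next I would record that the coprimality hypothesis propagates to every corner: since $(G_i/N_i)^{ab}$, $(G_i/M_i)^{ab}$ and $(G_i/M_iN_i)^{ab}$ are quotients of $G_i^{ab}$, their orders divide $|G_i^{ab}|$, so each of the four relevant pairs of abelianizations is again coprime and Lemma \ref{l} (applied with the term $\gamma_{c+1}$ defining $\mathcal{N}_c$) is available at all four corners.

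The core of the argument is a comparison between the free-product diagram and an auxiliary product diagram. For each corner I would use the natural epimorphism $\phi_{c+1}$ of \eqref{ker}, say $\frac{K_{1_.}\ast K_{2_.}}{\gamma_{c+1}(K_{1_.}\ast K_{2_.})}\to\frac{K_{1_.}\times K_{2_.}}{\gamma_{c+1}(K_{1_.}\times K_{2_.})}$, together with its analogues for $L,R,S$. These maps are natural in the resolution maps, so they assemble into a morphism from diagram \eqref{eqa} to the corresponding diagram built on $K_{1_.}\times K_{2_.}$, and every such epimorphism is a fibration (Theorem \ref{sg}). Since $\gamma_{c+1}$ splits over direct products, $\frac{K_{1_.}\times K_{2_.}}{\gamma_{c+1}(K_{1_.}\times K_{2_.})}\cong\frac{K_{1_.}}{\gamma_{c+1}(K_{1_.})}\times\frac{K_{2_.}}{\gamma_{c+1}(K_{2_.})}$ and likewise at the other corners, and under this identification the induced maps $\alpha_V,\gamma_V,\beta_V,\delta_V$ of the product diagram become the componentwise maps $\alpha_{V_1}\times\alpha_{V_2}$, etc. Hence the product diagram is the componentwise product of the two diagrams \eqref{eqa} attached to $(G_1,M_1,N_1)$ and $(G_2,M_2,N_2)$, and, because kernels commute with products, its total kernel is $\ker(\alpha_{V_1},\gamma_{V_1})\times\ker(\alpha_{V_2},\gamma_{V_2})$.

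Finally I would transport this splitting across the comparison morphism. The kernel of $\phi_{c+1}$ at each corner has trivial homotopy in every degree, which is exactly the vanishing of $\pi_*(\ker\phi_{c+1})$ established inside the proof of Lemma \ref{l}; thus the comparison morphism induces an isomorphism on the homotopy of each corner. Arguing as in the directed-limit theorem, I would then apply the Five Lemma to the long exact sequences of the two nested short exact sequences defining $\ker(\alpha_V,\gamma_V)$ (the top row and left column of \eqref{eqa}) to conclude $\pi_*\big(\ker(\alpha_V,\gamma_V)\big)\cong\pi_*\big(\ker(\alpha_V^{\times},\gamma_V^{\times})\big)$. Taking $\pi_1$ and combining with the product decomposition of the previous step yields the asserted isomorphism $M^{(c)}(G_1\ast G_2,\dots)\cong M^{(c)}(G_1,M_1,N_1)\oplus M^{(c)}(G_2,M_2,N_2)$. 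I expect this last transport step to be the main obstacle: one must be sure that the degreewise vanishing of $\pi_*(\ker\phi_{c+1})$ at all four corners really does force an isomorphism on the iterated kernel $\ker(\alpha_V,\gamma_V)$, which requires chasing the comparison through both nested fibration sequences rather than through a single one.
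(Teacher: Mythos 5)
Your proposal follows essentially the same route as the paper's own proof: compare the free-product diagram with the direct-product diagram via the natural maps $\phi$, use the vanishing of $\pi_*(\ker\phi)$ established inside Lemma \ref{l}, and transfer the resulting splitting down to the iterated kernel $\ker(\alpha_V,\gamma_V)$ by repeated applications of the Five Lemma along the nested fibration sequences of diagram \eqref{eqa}, finally taking $\pi_1$. Your explicit verification that the coprimality hypothesis passes to the quotient corners (since $(G_i/N_i)^{ab}$, $(G_i/M_i)^{ab}$ and $(G_i/M_iN_i)^{ab}$ are quotients of $G_i^{ab}$) is a detail the paper leaves implicit, but otherwise the two arguments coincide.
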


\begin{proof}
Consider the same notation as the proof of Theorem \ref{2} and fix $c$. Applying the homomorphism
 $\frac{K_{1_.}\ast K_{2_.}}{\gamma_c(K_{1_.}\ast K_{2_.})}\rightarrow\frac{K_{1_.}\times K_{2_.}}{\gamma_c(K_{1_.}\times K_{2_.})}$ to the simplicial resolutions of pairs of groups $(G_i,N_i)$, for $i=1,2$, we get the following commutative diagram
$$\begin{array}{lclclclc}& \pi_{n+1}(\frac{L_{1_.}\ast L_{2_.}}{\gamma_c(L_{1_.}\ast L_{2_.})})&\rightarrow&\pi_n(\ker(\alpha_1*\alpha_2))&\rightarrow&\pi_n(\frac{K_{1_.}\ast K_{2_.}}{\gamma_c(K_{1_.}\ast K_{2_.})})&\rightarrow&\pi_n(\frac{L_{1_.}\ast L_{2_.}}{\gamma_c(L_{1_.}\ast L_{2_.})})\\ &\downarrow & &\downarrow & &\downarrow & &\downarrow \\& \pi_{n+1}(\frac{L_{1_.}\times L_{2_.}}{\gamma_c(L_{1_.}\times L_{2_.})})&\rightarrow&\pi_n(\ker(\alpha_1\times\alpha_2))&\rightarrow&\pi_n(\frac{K_{1_.}\times K_{2_.}}{\gamma_c(K_{1_.}\times K_{2_.})})&\rightarrow&\pi_n(\frac{L_{1_.}\times L_{2_.}}{\gamma_c(L_{1_.}\times L_{2_.})}).
\end{array}$$
Lemma \ref{l} and the Five Lemma imply that $\pi_n(\ker(\alpha_1*\alpha_2))\cong \pi_n(\ker(\alpha_1))\oplus\pi_n(\ker(\alpha_2))$. Similarly for the pair of groups $(G/M,MN/M)$ we have $\pi_n(\ker(\gamma_1*\gamma_2))\cong \pi_n(\ker(\gamma_1))\oplus\pi_n(\ker(\gamma_2))$. With a similar argument for the exact sequence \eqref{triple} we have the following commutative diagram
$$\begin{array}{clclclc}&\rightarrow&\pi_n(\ker(\alpha_1*\alpha_2),\ker(\gamma_1*\gamma_2))&\rightarrow
&\pi_n(\ker(\alpha_1*\alpha_2))&\rightarrow&\pi_n(\ker(\gamma_1*\gamma_2))\\& &\downarrow& &\downarrow& &\downarrow\\
&\rightarrow&\pi_n(\ker(\alpha_1,\gamma_1),\ker(\alpha_2,\gamma_2))&\rightarrow
&\pi_n(\ker(\alpha_1,\alpha_2))&\rightarrow&\pi_n(\ker(\gamma_1,\gamma_2)).\end{array}$$
Therefore we can prove that $\pi_n(\ker(\alpha_1*\alpha_2, \gamma_1*\gamma_2))\cong \pi_n(\ker(\alpha_1,\gamma_1))\oplus\pi_n(\ker(\alpha_2,\gamma_2)).$ Hence for $n=1$, we have
$$ M^{(c)}(G_1 \ast G_2,\langle M_1\ast M_2\rangle^{G_1\ast G_2},\langle N_1\ast N_2\rangle^{G_1\ast G_2})\cong M^{(c)}(G_1,M_1,N_1)\oplus M^{(c)}(G_2,M_2,N_2).$$
\end{proof}
\begin{theorem}\label{torsion}
Let $M^{(1)}(G,N)$, $M^{(1)}(G/N)$ and $H_3(G/N)$ be torsion groups. Then $M^{(c)}(G,N)$ is also a torsion group, for all $c\geq2.$
\end{theorem}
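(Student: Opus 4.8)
The plan is to argue by induction on $c$, peeling off one layer of the lower central series at each step and reducing the torsion claim to a Künneth computation, exactly in the spirit of Lemma \ref{l}. Let $K_.$ and $L_.$ be free simplicial resolutions of $G$ and $G/N$, let $\alpha\colon K_.\to L_.$ be the induced surjection, and write $\bar P=\ker(\alpha_{ab}\colon K_.^{ab}\to L_.^{ab})$, so that $\pi_1(\bar P)=M^{(1)}(G,N)$, $\pi_0(\bar P)\cong N/[N,G]$, and $\pi_n(L_.^{ab})\cong H_{n+1}(G/N)$. For the variety $\mathcal N_c$ we take $V=\gamma_{c+1}$ and set $\ker\alpha_{V_c}=\ker\big(K_./\gamma_{c+1}(K_.)\to L_./\gamma_{c+1}(L_.)\big)$, so that $M^{(c)}(G,N)=\pi_1(\ker\alpha_{V_c})$. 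The base case $c=1$ is precisely the hypothesis that $M^{(1)}(G,N)$ is torsion.

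For the inductive step I would compare the central extension $1\to\gamma_c(K_.)/\gamma_{c+1}(K_.)\to K_./\gamma_{c+1}(K_.)\to K_./\gamma_c(K_.)\to1$ with the corresponding one for $L_.$, and take vertical kernels, obtaining a short exact sequence of simplicial groups
$$1\to T_c\to\ker\alpha_{V_c}\to\ker\alpha_{V_{c-1}}\to1,\qquad T_c=\ker\big(\gamma_c(K_.)/\gamma_{c+1}(K_.)\to\gamma_c(L_.)/\gamma_{c+1}(L_.)\big).$$
The associated long exact homotopy sequence gives $\pi_1(T_c)\to\pi_1(\ker\alpha_{V_c})\to\pi_1(\ker\alpha_{V_{c-1}})$, so $M^{(c)}(G,N)$ is an extension of a subgroup of $M^{(c-1)}(G,N)$ by a quotient of $\pi_1(T_c)$. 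Since $M^{(c-1)}(G,N)$ is torsion by the inductive hypothesis, the whole problem reduces to showing that $\pi_1(T_c)$ is torsion.

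This is the crux. Since $0\to\bar P\to K_.^{ab}\to L_.^{ab}\to0$ is a degreewise extension of free abelian simplicial groups, applying the free Lie layer functor degreewise equips $T_c$ with a finite filtration whose associated graded summands are subquotients of tensor products $X_1\otimes\cdots\otimes X_c$ with each $X_k\in\{\bar P,\ L_.^{ab}\}$ and at least one factor equal to $\bar P$. Using the degreewise split short exact sequences relating the Lie, exterior and symmetric powers to the tensor powers, together with the Künneth isomorphism of Theorem \ref{sg}(\ref{n}), the groups $\pi_1$ and $\pi_2$ of these pieces become sums of tensor and $\mathrm{Tor}$ products of the homotopy groups $\pi_*(\bar P)$ and $\pi_*(L_.^{ab})$. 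The only entries that can occur in total degree $1$ are a factor $\pi_1(\bar P)=M^{(1)}(G,N)$ or $\pi_1(L_.^{ab})\cong H_2(G/N)=M^{(1)}(G/N)$, a factor $\pi_2(L_.^{ab})\cong H_3(G/N)$ coming from the connecting maps out of the symmetric-power corrections, and various $\mathrm{Tor}$ terms. The first three are torsion by hypothesis, every $\mathrm{Tor}(A,B)$ is a torsion group, and a tensor product having a torsion factor is torsion; hence $\pi_1$ of each graded piece, and therefore $\pi_1(T_c)$ itself, is torsion. Feeding this back into the long exact sequence completes the induction.

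The main obstacle is exactly the computation of $\pi_1(T_c)$: because the layer $\gamma_c/\gamma_{c+1}$ is a \emph{non-additive} functor of the abelianization, its derived functors are not captured by a single tensor power, and the symmetric-power correction terms must be tracked. It is precisely these correction terms that force the hypothesis on $H_3(G/N)$ into the argument, since a factor $(G/N)^{ab}\otimes H_3(G/N)$ would otherwise fail to be torsion. Verifying that the filtration of $T_c$ has the stated graded pieces and that the connecting homomorphisms contribute nothing beyond the listed torsion groups is the delicate technical point.
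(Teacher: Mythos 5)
Your inductive skeleton is exactly the paper's: the paper also passes to the lower central layers, uses the exact sequence $\pi_1(\ker(\alpha_n/\alpha_{n+1}))\rightarrow M^{(n+1)}(G,N)\rightarrow M^{(n)}(G,N)$ obtained by comparing the two central extensions, and reduces everything to showing that $\pi_1$ of the layer kernel (your $T_c$) is torsion. The gap is precisely at the point you yourself flag as ``the delicate technical point'': your treatment of $\pi_1(T_c)$ is a plan, not a proof, and as stated it would not go through. Integrally, the layer $\gamma_c(K_.)/\gamma_{c+1}(K_.)$ is the degree-$c$ free Lie power $L_c(K_.^{ab})$; this is a non-additive functor, and its simplicial derived functors are \emph{not} expressible as tensor powers plus split ``symmetric-power corrections'' in any routine way (controlling them integrally is the content of Curtis's lower central series spectral sequence, which is genuinely hard). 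Your fallback phrase --- that the graded pieces are ``subquotients of tensor products'' --- gives no torsion control at all, because for simplicial groups $\pi_1$ of a subobject or subquotient is not a subquotient of $\pi_1$ of the ambient object: the long exact sequences have connecting maps feeding into it. Since the entire inductive step rests on this unverified claim, the argument is incomplete.

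The paper sidesteps the analysis of $T_c$ entirely: from the fibration $T_c\rightarrow \gamma_n(K_.)/\gamma_{n+1}(K_.)\rightarrow \gamma_n(L_.)/\gamma_{n+1}(L_.)$ it sandwiches $\pi_1(T_c)$ in the exact sequence $\pi_2(\gamma_n(L_.)/\gamma_{n+1}(L_.))\rightarrow\pi_1(T_c)\rightarrow\pi_1(\gamma_n(K_.)/\gamma_{n+1}(K_.))$ and proves both outer terms torsion by Ellis's rationalization argument \cite{el}: over the rationals $Q$ the Dynkin idempotent makes $L_c(A_.)\otimes Q$ a natural direct summand of $A_.^{\otimes c}\otimes Q$, all Tor terms vanish, and the K\"{u}nneth formula of Theorem \ref{sg} leaves, in total degree $1$ for the $K$-layer, only terms containing $H_2(G)\otimes Q$ (torsion, since $M^{(1)}(G,N)$ and $M^{(1)}(G/N)$ are torsion and $M(G,N)\rightarrow M(G)\rightarrow M(G/N)$ is exact), and in total degree $2$ for the $L$-layer, terms containing $H_2(G/N)\otimes Q$ or $H_3(G/N)\otimes Q$. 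This also shows that your bookkeeping of where $H_3(G/N)$ enters is wrong: it is needed because the paper's sandwich uses $\pi_2$ of the \emph{base} layer, not because of correction terms inside $T_c$. Indeed, if you repair your own route by the same rationalization trick --- $T_c\otimes Q$ is then a natural direct summand of $\ker\big((K_.^{ab})^{\otimes c}\otimes Q\rightarrow(L_.^{ab})^{\otimes c}\otimes Q\big)$, whose filtration by the number of $\bar P$-factors has honest tensor-product graded pieces, each of which contributes in total degree $1$ only terms containing $\pi_1(\bar P)\otimes Q=M^{(1)}(G,N)\otimes Q$ or $\pi_1(L_.^{ab})\otimes Q=M^{(1)}(G/N)\otimes Q$ --- you obtain $\pi_1(T_c)$ torsion using only the two multiplier hypotheses, with no reference to $H_3(G/N)$ at all. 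So your strategy, once correctly executed, is not merely different from the paper's but in fact yields the conclusion under weaker hypotheses; as written, however, it does not constitute a proof.
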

\begin{proof}
Let $K_.$ and $L_.$ be free simplicial resolutions of $G$ and $G/N$, respectively. Consider the epimorphisms
 $\frac{K_.}{\gamma_n(K_.)} \stackrel{\alpha_n}{\rightarrow} \frac{L_.}{\gamma_n(L_.)}$ and $ \frac{\gamma_n(K_.)}{\gamma_{n+1}(K_.)} \stackrel{\alpha_n / \alpha_{n+1}}{\rightarrow}\frac{\gamma_n(L_.)}{\gamma_{n+1}(L_.)}.$ By \cite[Claim 2]{el}  $\pi_1(\gamma_{n}(K_.)/\gamma_{n+1}(K_.))$ is a torsion group. In \cite{mmv} the authors obtained the following exact sequence
\begin{equation}\label{n}\pi_2(\frac{\gamma_{n}(L_.)}{\gamma_{n+1}(L_.)})\rightarrow\pi_1(\ker\alpha_n/\alpha_{n+1})
\rightarrow \pi_1(\frac{\gamma_{n}(K_.)}{\gamma_{n+1}(K_.)}).\end{equation}
 Note that $\pi_1(\ker\alpha_n/\alpha_{n+1})$ satisfies in the following exact sequence
\begin{equation}\label{nn}\pi_1(\ker\alpha_n/\alpha_{n+1})\rightarrow M^{(n+1)}(G,N)\rightarrow M^{(n)}(G,N).\end{equation}
With a similar argument of \cite[Claim 2]{el}  we can get the isomorphism $\pi_2(L^{ab}_.\otimes Q)\cong H_3(G/N)\otimes{Q}=0$ which leads to  $\pi_2(\gamma_{n}(L_.)/\gamma_{n+1}(L_.))\otimes{Q}\cong0$. Therefore $\pi_2(\gamma_{n}(L_.)/\gamma_{n+1}(L_.))$ is a torsion group. Thus \eqref{n} implies that $\pi_1(\ker\alpha_n/\alpha_{n+1})$ is a torsion group. Hence the result follows by induction from \eqref{nn}.
\end{proof}
Theorem \ref{torsion} can be extended to triples of the groups with the similar argument.
\begin{theorem}
Let $M^{(1)}(G,M,N)$, $M^{(1)}(G)$, $H_3(G/N)$, $H_3(G/M)$ and $H_4(G/MN)$ be torsion groups. Then $M^{(c)}(G,M,N)$ is a torsion group, for all $c\geq1.$
\end{theorem}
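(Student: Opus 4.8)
The plan is to mimic exactly the strategy of Theorem \ref{torsion}, replacing the pair-theoretic ingredients with their triple analogues. The key technical object is again the graded quotient: for a free simplicial resolution coming from the triple $(G,M,N)$, one sets up the epimorphisms $\frac{K_.}{\gamma_n(K_.)}\rightarrow\frac{L_.}{\gamma_n(L_.)}$ and their associated maps on the graded pieces $\frac{\gamma_n(K_.)}{\gamma_{n+1}(K_.)}\rightarrow\frac{\gamma_n(L_.)}{\gamma_{n+1}(L_.)}$, together with the corresponding maps for the resolutions $R_.$ and $S_.$ of $G/M$ and $G/MN$. The triple invariant $\mathcal{V}M(G,M,N)=\pi_1(\ker(\alpha_V,\gamma_V))$ then sits in an exact sequence relating $M^{(n+1)}(G,M,N)$ to $M^{(n)}(G,M,N)$ analogous to the sequence \eqref{nn}, and the whole argument reduces to an induction on $c$ once the base case and the torsion of the relevant graded homotopy groups are established.

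First I would record the torsion of the building blocks. By the pair-theoretic Theorem \ref{torsion}, the hypotheses that $M^{(1)}(G,N)$, $M^{(1)}(G/M)$ (these arise as the pair invariants attached to the two projections in Diagram \eqref{eqa}), and the homology groups $H_3(G/N)$, $H_3(G/M)$ are torsion already force $M^{(c)}(G,N)$ and $M^{(c)}(G/M,MN/M)$ to be torsion for all $c$; similarly $M^{(1)}(G)$ torsion together with $H_3(G/N)$, $H_3(G/M)$ feeds the diagonal direction. The extra hypothesis $H_4(G/MN)$ torsion is what I expect to be needed to control the fourth simplicial quotient $S_.$: by the same rationalization argument as in \cite[Claim 2]{el}, one shows $\pi_2(S^{ab}_.\otimes\mathbb{Q})\cong H_3(G/MN)\otimes\mathbb{Q}$ and, pushing one degree higher to capture the triple's extra dimension, the relevant group rationalizes to a summand built from $H_4(G/MN)$, so that the assumed torsion makes it vanish after tensoring with $\mathbb{Q}$.

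Next I would assemble the exact sequences. The short exact sequence of simplicial groups in the left column of Diagram \eqref{eqa}, applied to the graded quotients by $\gamma_n/\gamma_{n+1}$ rather than by $V$, yields a long exact homotopy sequence linking $\pi_*$ of the graded kernel $\ker(\alpha_n/\alpha_{n+1},\gamma_n/\gamma_{n+1})$ to $\pi_*$ of the individual graded kernels $\ker(\alpha_n/\alpha_{n+1})$ and $\ker(\gamma_n/\gamma_{n+1})$. Each of the latter is torsion by the pair-case reasoning above, and the connecting terms $\pi_2$ of the graded quotients of $L_.$ and $S_.$ are torsion by the rationalization step. Feeding this into the analogue of sequence \eqref{n}, the Five Lemma (or a diagram chase through the torsion category) gives that $\pi_1\big(\ker(\alpha_n/\alpha_{n+1},\gamma_n/\gamma_{n+1})\big)$ is a torsion group. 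The analogue of \eqref{nn}, relating $M^{(n+1)}(G,M,N)$, $M^{(n)}(G,M,N)$ and this graded kernel, then completes the induction: the base case $c=1$ is hypothesized, and the inductive step is exactly this torsion-of-the-graded-piece statement.

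The main obstacle will be the rationalization computation in the presence of the second normal subgroup: in the pair case one only needs $\pi_2(L^{ab}_.\otimes\mathbb{Q})\cong H_3(G/N)\otimes\mathbb{Q}$, but the triple invariant lives one homotopical degree deeper, so the honest identification of which homology group controls the vanishing of $\pi_2\big(\ker(\alpha_V,\gamma_V)^{ab}\otimes\mathbb{Q}\big)$ requires unwinding the double kernel through the Moore-complex identification of Theorem \ref{sg}\eqref{m} and the tensor-splitting of Theorem \ref{sg}\eqref{n}. I expect this to be where $H_4(G/MN)$ genuinely enters, and verifying that no further homology groups are needed — i.e. that all other contributions are already torsion by the stated hypotheses — is the delicate point that the phrase ``with the similar argument'' is quietly shouldering.
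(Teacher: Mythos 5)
Your overall architecture coincides with the paper's: induction on $c$ through the graded pieces $\gamma_n/\gamma_{n+1}$ of the four simplicial resolutions, a triple analogue of the sequence \eqref{nn} relating $M^{(n+1)}(G,M,N)$ to $M^{(n)}(G,M,N)$, the long exact homotopy sequence of the left column of Diagram \eqref{eqa} in graded form, and the rationalization argument of \cite[Claim 2]{el} pushed one degree higher so that $H_4(G/MN)$ controls a $\pi_3$-term attached to $S_.$. That is exactly the paper's sketch.

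There is, however, a genuine gap in how you ground the torsion of the building blocks. In your second paragraph you invoke ``the hypotheses that $M^{(1)}(G,N)$, $M^{(1)}(G/M)$ \dots are torsion,'' but these are not hypotheses of the theorem --- it assumes $M^{(1)}(G,M,N)$ and $M^{(1)}(G)$ torsion --- and applying Theorem \ref{torsion} to the pairs $(G,N)$ and $(G/M,MN/M)$ would require, respectively, $M^{(1)}(G,N)$ and $M^{(1)}(G/N)$ torsion, and $M^{(1)}(G/M,MN/M)$, $M^{(1)}(G/MN)$ and $H_3(G/MN)$ torsion, none of which are available. So the conclusion ``$M^{(c)}(G,N)$ and $M^{(c)}(G/M,MN/M)$ are torsion for all $c$'' is not derivable from the stated hypotheses; fortunately it is also not needed. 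What the induction actually consumes --- and what the paper uses --- are the \emph{ingredients} of the proof of Theorem \ref{torsion}, not its statement: namely $\pi_1(\ker\alpha_n/\alpha_{n+1})$ torsion, obtained from $\pi_2(\gamma_n(L_.)/\gamma_{n+1}(L_.))$ (controlled by $H_3(G/N)$) and $\pi_1(\gamma_n(K_.)/\gamma_{n+1}(K_.))$ (where the hypothesis on $M^{(1)}(G)$ enters via \cite[Claim 2]{el}), together with $\pi_2(\ker\beta_n/\beta_{n+1})$ torsion for the map $\beta_n$ between the $R_.$- and $S_.$-quotients, obtained from $\pi_2(\gamma_n(R_.)/\gamma_{n+1}(R_.))$ (controlled by $H_3(G/M)$) and $\pi_3(\gamma_n(S_.)/\gamma_{n+1}(S_.))$ (controlled by $H_4(G/MN)$). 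Relatedly, in your third paragraph the connecting term for $S_.$ must be $\pi_3$ of its graded quotient, not $\pi_2$ --- the degree shift you correctly anticipate earlier but then drop: the left-column sequence reads $\pi_2(\ker\beta_n/\beta_{n+1})\rightarrow\pi_1(\ker(\alpha_n/\alpha_{n+1},\beta_n/\beta_{n+1}))\rightarrow\pi_1(\ker\alpha_n/\alpha_{n+1})$, so it is the $\pi_2$ of the second kernel, hence the $\pi_3$ of the $S_.$-quotient, that $H_4(G/MN)$ must kill rationally. With the grounding replaced as above your sketch becomes the paper's proof; as written, the appeal to the pair theorem's conclusion under unavailable hypotheses does not go through.
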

\begin{proof}
A detailed proof is quite lengthy, so we give a sketch of proof. Consider the following commutative diagrams which induce from the diagram of the free simplicial resolutions of triple of groups $(G,M,N)$ \eqref{eqa}
$$
\begin{array}{ccc}
\frac{K_.}{\gamma_n(K_.)} &\stackrel{\alpha_n}{\rightarrow}& \frac{L_.}{\gamma_n(L_.)}\\ \downarrow& &\downarrow \\ \frac{R_.}{\gamma_n(R_.)}&\stackrel
{\beta_n}{\rightarrow}& \frac{S_.}{\gamma_n(S_.)}
\end{array}\ \ \ \ \ \ \ \ \ \
\begin{array}{ccc}
\frac{\gamma_n(K_.)}{\gamma_{n+1}(K_.)}&\stackrel{\alpha_n / \alpha_{n+1}}{\rightarrow}&\frac{\gamma_n(L_.)}{\gamma_{n+1}(L_.)}\\ \downarrow& &\downarrow \\ \frac{\gamma_n(R_.)}{\gamma_{n+1}(R_.)}&\stackrel
{\beta_n/ \beta_{n+1}}{\rightarrow}& \frac{\gamma_n(S_.)}{\gamma_{n+1}(S_.)}.
\end{array}$$
With a similar argument of \cite{mmv2}, we can obtain the following exact sequence
$$\pi_1(\ker(\alpha_n/\alpha_{n+1},\beta_n/\beta_{n+1}))\rightarrow M^{(n+1)}(G,M,N)\rightarrow M^{(n)}(G,M,N)$$
which satisfies in exact sequence
$$\pi_2(\ker\beta_n/\beta_{n+1})\rightarrow\pi_1\ker((\alpha_n/\alpha_{n+1},\beta_n/\beta_{n+1}))\rightarrow \pi_1(\ker\alpha_n/\alpha_{n+1}).$$
Moreover, $\pi_2(\ker\beta_n/\beta_{n+1})$ fits in the following exact sequence
$$\pi_3(\frac{\gamma_{n}(S_.)}{\gamma_{n+1}(S_.)})\rightarrow\pi_2(\ker\beta_n/\beta_{n+1})
\rightarrow \pi_2(\frac{\gamma_{n}(R_.)}{\gamma_{n+1}(R_.)}).$$
By Theorem \ref{torsion}, we have $\pi_2(\ker\alpha_n/\alpha_{n+1})$ and $\pi_2(\frac{\gamma_{n}(R_.)}{\gamma_{n+1}(R_.)})$ are torsion groups. Hence the
result is proved if we show that $\pi_3(\frac{\gamma_{n}(S_.)}{\gamma_{n+1}(S_.)})$ is a torsion group.

With a similar argument of the proof of  Theorem \ref{torsion}, we can prove that if $H_4(G/MN)$ is a torsion group, then so is $\pi_3(\frac{\gamma_{n}(S_.)}{\gamma_{n+1}(S_.)})$.
\end{proof}



\begin{thebibliography}{99}
\bibitem{bar}  J. Barja, C. Rodreguez. Homology groups $H^q_n(-)$ and eight-term exact sequences.
 Cahiers de Topologie et G\'{e}om\'{e}trie Differ\'{e}retielle cat\'{e}goriques, 1990, 31(2): 91-120.

\bibitem{bur}  J. Burns, G. Ellis. On the nilpotent multipliers of groups. Math. Z., 1997, 226: 405-428.

\bibitem{cur} E.B. Curtis. Simplicial homotopy theory. Advances in Math., 1971, 6: 107-209 .

\bibitem{el} G. Ellis. A magnus-witt type isomorphism for non free groups. Georgian Mathematical Journal, 2002, 9(4): 703-708.

\bibitem{ellis} G. Ellis. The Schur multiplier of a pair of groups. Applied Categorical Structures, 1998 6: 355-371.
\bibitem{fr}  L. Franco. Simplicial deirvation, higher Bear invariant and homology of crossed modules. Communications in Algebra, 1998, 26(4): 1125-1139.

\bibitem{geo} P.G. Georss, J.F. Jardine. Simplicial Homotopy Theory. Progress in Math.,  174 Bikhauser, Boston-Basel-Berlin, 1999 .

\bibitem{hop}  H. Hopf. Fundamental gruppe und zweite Bettische Gruppe.  Comment. Math. Helvetici, 1942, 14: 257-309.

\bibitem{in}  H.N. Inassaridze.  Homotopy of pseudosimplicial groups and nonabelian derived functors. Sakharth. SSR Mecn. Akad. Moambe, 1974, 76:  533-536.

\bibitem{mmv} Z. Vasagh, H. Mirebrahimi, B. Mashayekhy. On Baer Invariants of groups with topological approach. arXiv:1010.0291.

\bibitem{mmv2} Z. Vasagh, H. Mirebrahimi, B. Mashayekhy. On Baer invariants of pairs of groups. arXiv:1106.1278

\end{thebibliography}
\end{document}